\swapnumbers \numberwithin{equation}{section}
\theoremstyle{plain}
\newtheorem{thm}{Theorem}[section]
\newtheorem{lem}[thm]{Lemma}
\newtheorem{conjec}[thm]{Conjecture}
\newtheorem{prop}[thm]{Proposition}
\newtheorem{cor}[thm]{Corollary}
\theoremstyle{definition}
\newtheorem{defin}[thm]{Definition}
 \newcommand{\To}{\longrightarrow}
 \newcommand{\Wi}{\widetilde}
\DeclareMathOperator{\cat}{{\mbox{\rm cat$_{\rm LS}$}}}
\DeclareMathOperator{\as}{{\rm asdim}}
\def\scr{\mathcal}
\def\A{{\scr A}}
\def\Z{{\mathbb Z}}
\def\e{{\mathbf e}}
\def\F{{\mathbf F}}
\def\E{{\mathbf E}}
\def\S{{\mathbf S}}
\def\1{\hbox{\rm\rlap {1}\hskip.03in{\rom I}}}
\def\Bbbone{{\rm1\mathchoice{\kern-0.25em}{\kern-0.25em}
{\kern-0.2em}{\kern-0.2em}I}}
\long\def\forget#1\forgotten{} %
\newcommand\ver[1]{\marginpar{\tiny Changed in Ver \VER}}
\newcommand{\mc}{ \text {mc}}
\date{\today}
\begin{document}

\title[On Gromov's scalar curvature conjecture]{On Gromov's scalar curvature 
conjecture}

\author[D.~Bolotov]{Dmitry Bolotov}

\author[A.~Dranishnikov]{Alexander  Dranishnikov$^{1}$}

\thanks{$^{1}$Supported by NSF, grant DMS-0604494}

\address{Dmitry Bolotov, Verkin Institute of Low Temperature
Physics, Lenina Ave, 47, Kharkov, 631103, Ukraine}
\email{bolotov@univer.kharkov.ua}

\address{Alexander N. Dranishnikov, Department of Mathematics, University
of Florida, 358 Little Hall, Gainesville, FL 32611-8105, USA}
\email{dranish@math.ufl.edu}

\subjclass[2000]
{Primary 55M30; 
Secondary 53C23,  
57N65  
}

\keywords{Positive scalar curvature, macroscopic dimension,
connective K-theory, Strong Novikov Conjecture}

\begin{abstract}
We prove the Gromov conjecture on the macroscopic dimension of the
universal covering of a closed spin manifold with a positive scalar
curvature under the following assumptions on the fundamental group.
\begin{thm}
Suppose that a discrete group $\pi$ has the following properties:

1. The Strong Novikov Conjecture holds for $\pi$.

2. The natural map $per:ko_n(B\pi)\to KO_n(B\pi)$ is injective.

Then the Gromov Macroscopic Dimension Conjecture holds true for spin
$n$-manifolds $M$ with the fundamental group $\pi_1(M)=\pi$.
\end{thm}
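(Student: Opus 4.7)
The plan is a two-stage argument. First, use the spin hypothesis, the Lichnerowicz formula, the Strong Novikov Conjecture, and the injectivity of $per$ to show that the connective $ko$-theoretic fundamental class of $M$, pushed into $B\pi$, vanishes; second, translate this homotopy-theoretic vanishing into a factorization of the classifying map through the $(n-2)$-skeleton of $B\pi$, whence the macroscopic dimension bound.

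Concretely, let $f:M\to B\pi$ classify the universal cover. The spin structure on $M$ produces a connective $ko$-fundamental class $[M]_{ko}\in ko_n(M)$; set $\beta=f_*[M]_{ko}\in ko_n(B\pi)$. Under the composition
$$ko_n(B\pi)\xrightarrow{per}KO_n(B\pi)\xrightarrow{A}KO_n(C^*\pi),$$
the element $\beta$ maps to the Rosenberg higher index $\alpha(M)$, which vanishes for a psc metric by the Lichnerowicz formula applied to the $C^*\pi$-linear Dirac operator. By the Strong Novikov Conjecture the assembly map $A$ is injective, so $per(\beta)=0$ in $KO_n(B\pi)$; the second assumption then forces $\beta=0$ in $ko_n(B\pi)$.

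The next step is to deform $f$ into the $(n-2)$-skeleton of $B\pi$ by skeletal obstruction theory. The two relevant obstructions live in $H^{n-1}(M;\pi_{n-1}(B\pi,B\pi^{(n-2)}))$ and $H^n(M;\pi_n(B\pi,B\pi^{(n-2)}))$; their computation is governed by the Atiyah--Hirzebruch spectral sequence converging to $ko_*(B\pi)$. The edge quotient recovers $f_*[M]\in H_n(B\pi;\Z)$, whose vanishing gives classical inessentiality and factorization through $B\pi^{(n-1)}$, while the extra obstruction to pushing one dimension further is detected by the $Sq^2$-type $d_2$-differential together with the $\Z/2$ summands coming from $\pi_1(ko)$ and $\pi_2(ko)$. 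Vanishing of the full class $\beta$, and not merely its ordinary-homology shadow, kills both obstructions and yields $g:M\to B\pi^{(n-2)}$ with $i\circ g\simeq f$, where $i:B\pi^{(n-2)}\hookrightarrow B\pi$ is the inclusion. Since $M$ is compact, $g$ factors through a finite subcomplex of dimension at most $n-2$, and the induced $\pi$-equivariant map on universal covers is uniformly cobounded (using the Milnor--Svarc quasi-isometry $\tilde M\sim\pi$), giving $\dim_{\mc}\tilde M\leq n-2$.

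The hard part is the second stage. Ordinary homological inessentiality would only give $\dim_{\mc}\tilde M\leq n-1$; the extra dimension of reduction must come from the finer information carried by $ko_*$ in low degrees---specifically the $\Z/2$'s in $\pi_1(ko)$, $\pi_2(ko)$ and the $Sq^2$-differential. Extracting this information in a way that genuinely uses $\beta=0$ rather than just $per(\beta)=0$, and is compatible with the spin structure on $M$, is where the bulk of the technical work of the proof will lie.
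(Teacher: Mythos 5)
Your first stage coincides with the paper's: Rosenberg's theorem gives $\alpha\circ per\circ f_*([M]_{ko})=0$, and hypotheses 1 and 2 then force $\beta=f_*([M]_{ko})=0$ in $ko_n(B\pi)$. That part is fine, but it is the easy half. The second stage, which you yourself defer as ``where the bulk of the technical work of the proof will lie,'' is precisely the content of the theorem, and your sketch of it is not an argument. Two concrete problems. First, the obstruction at level $n-1$ is not killed by $\beta=0$ through any spectral-sequence mechanism: in the paper it is handled by a completely different device (Lemma on inessential maps). Ordinary inessentiality ($f_*[M]=0$, the $H\Z$-shadow of $\beta$) gives $\cat M\le n-1$ via the essentiality--category theorem, hence $f^*(\beta_\pi^{\,n-1})=0$ for the Berstein--Svarc class; universality of that class exhibits the obstruction $o^{n-1}$ as a coefficient-image of $\beta_\pi^{\,n-1}$, so it vanishes, and a separate extension step (contractibility of $E\pi^{(n-2)}$ inside $E\pi^{(n-1)}$, needing $n\ge 4$) produces a classifying map with $f(M^{(n-1)})\subset B\pi^{(n-2)}$ and $f(M)\subset B\pi^{(n-1)}$. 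Without this preliminary deformation the top obstruction you want to discuss is not even well defined (higher obstructions have indeterminacy), so asserting that ``vanishing of the full class $\beta$ kills both obstructions'' begs the question.

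Second, for the top obstruction the paper does not run an Atiyah--Hirzebruch/$Sq^2$ computation. It identifies the obstruction class with $\bar f_*(1)\in\pi_n(B\pi/B\pi^{(n-2)})$ for the quotient map $\bar f:M/M^{(n-1)}=S^n\to B\pi/B\pi^{(n-2)}$, uses $n>3$ to pass to stable homotopy, and proves the key homotopy-theoretic input: $\pi_n^s(K/K^{(n-2)})\to ko_n(K/K^{(n-2)})$ is an isomorphism, established by decomposing the $(n-1)$-connected $(n+1)$-skeleton of the quotient into spheres and Moore spaces $M(\Z_m,n)$ and checking that non-nullhomotopic maps $S^n\to S^n$ and $S^n\to M(\Z_m,n)$ are nonzero on $ko_n$. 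Naturality ($p_*f_*=\bar f_*q_*$, with $q_*[M]_{ko}$ a generator of $ko_n(M/M^{(n-1)})\cong ko_n(S^n)\cong\Z$) then contradicts $\beta=0$. Your appeal to ``the $Sq^2$-type $d_2$-differential together with the $\Z/2$ summands of $\pi_1(ko)$, $\pi_2(ko)$'' gestures at the same low-degree $ko$-information, but it is not developed into a proof, and in particular it never explains why vanishing of $\beta$ in $ko_n(B\pi)$ controls an obstruction living in the quotient $ko_n(B\pi/B\pi^{(n-2)})$. So the proposal has a genuine gap: the entire obstruction-theoretic core (the analogues of the paper's Propositions on $\pi^s\to ko$ and on the primary obstruction, and the Berstein--Svarc/category lemma) is missing.
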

\end{abstract}

\maketitle \tableofcontents

\section {Introduction}
In his study of manifolds with positive scalar curvature M. Gromov
observed some large scale dimensional deficiency of their universal
coverings: For an $n$-dimensional manifold $M$, its universal
covering has to be at most $(n-2)$-dimensional from the macroscopic
point of view. For example, the product of a closed $(n-2)$-manifold
$N^{n-2}$ and the standard 2-sphere $M=N^{n-2}\times S^2$ admits a
metric of positive scalar curvature (by making the 2-sphere small).
The universal covering $\Wi M=\Wi N^{n-2}\times S^2$ looks like an
$(n-2)$-dimensional space $\Wi N^{n-2}$.  Gromov predicted similar
behavior for all manifolds with positive scalar curvature. He stated 
it in \cite{G1} as the following.
\begin{conjec}[{\bf Gromov}]\label{Grom}
For every closed Riemannian $n$-manifold $(M,g)$ with a positive
scalar curvature there is the inequality $$\dim_{\mc}(\Wi M,\tilde
g)\le n-2$$ where $(\Wi M,\tilde g)$ is the universal cover of $M$
with the pull-back metric.
\end{conjec}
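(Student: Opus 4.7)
The plan is to extract an algebraic obstruction from positive scalar curvature via index theory, and then convert that obstruction into a geometric macroscopic-dimension estimate. Let $M$ be a closed spin $n$-manifold with $\pi_1(M)=\pi$ admitting a metric of positive scalar curvature, and let $f:M\to B\pi$ classify the universal cover. The spin structure on $M$ provides a $ko$-fundamental class $[M]_{ko}\in ko_n(M)$, and we form $\alpha_M := f_*[M]_{ko}\in ko_n(B\pi)$. By the Lichnerowicz formula combined with Rosenberg's construction, positive scalar curvature forces $(A\circ per)(\alpha_M)=0$, where $A:KO_n(B\pi)\to KO_n(C^*_r\pi)$ is the analytic assembly map. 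Hypothesis (1) (the Strong Novikov Conjecture) makes $A$ injective, so $per(\alpha_M)=0$; hypothesis (2) then forces $\alpha_M=0$ in $ko_n(B\pi)$.

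The next step is to convert the algebraic vanishing $\alpha_M=0$ into the topological statement that $f$ can be deformed, after a controlled spin bordism of $M$ over $B\pi$, to a map factoring through the $(n-2)$-skeleton $B\pi^{(n-2)}$. I would exploit the $ko$-orientation $\Omega^{\mathrm{spin}}_*(B\pi)\to ko_*(B\pi)$ together with the skeletal filtration of $B\pi$; a careful analysis of the Atiyah--Hirzebruch spectral sequence for $ko_*(B\pi)$, combined with spin surgery in the spirit of Stolz's classification, should produce such a factorization from $\alpha_M=0$.

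Given a factorization $f\simeq j\circ h$ with $j:K\to B\pi$ the inclusion of an $(n-2)$-dimensional subcomplex, lifting to universal covers yields a $\pi$-equivariant map $\Wi M\to\Wi K$ whose target has topological dimension at most $n-2$. Since $\pi$ acts cocompactly by isometries on $(\Wi M,\tilde g)$, a \v{S}varc--Milnor argument shows this lifted map is uniformly cobounded, giving $\dim_{\mc}(\Wi M,\tilde g)\le n-2$. The main obstacle is the middle step: because $ko_*(\mathrm{pt})$ has nontrivial groups in low degrees ($\Z,\Z/2,\Z/2,0,\Z,\ldots$), the Atiyah--Hirzebruch spectral sequence has genuine differentials, so passing from $\alpha_M=0$ to a geometric skeletal factorization is not formal. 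The bulk of the work is to perform this translation while keeping simultaneous control over the spin bordism data, the vanishing $ko$-class, and the resulting macroscopic dimension; this is where I expect any residual hypotheses on $\pi$ to play their role.
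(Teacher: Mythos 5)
Your first step (Lichnerowicz--Rosenberg gives $\alpha\circ per\circ f_*([M]_{ko})=0$, then hypotheses (1) and (2) give $f_*([M]_{ko})=0$ in $ko_n(B\pi)$) and your last step (a factorization of $f$ through an $(n-2)$-dimensional complex lifts to a proper, uniformly cobounded $\pi$-equivariant map of universal covers, hence $\dim_{\mc}\Wi M\le n-2$) agree with the paper. The genuine gap is the middle step, which you explicitly leave open: passing from $f_*([M]_{ko})=0$ to a homotopy of $f$ into $B\pi^{(n-2)}$. Moreover, the strategy you sketch for it is misdirected. A ``controlled spin bordism of $M$ over $B\pi$'' in the spirit of Stolz changes the manifold: surgery methods produce statements about the spin bordism class of $(M,f)$, whereas the conclusion of the conjecture concerns the macroscopic dimension of the universal cover of the \emph{given} $(M,g)$, and macroscopic dimension is not a bordism invariant, so there is no way to transfer the estimate back. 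Your closing remark that ``residual hypotheses on $\pi$'' should enter at this point is a symptom of the missing argument: in fact no further hypotheses are needed there.

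The paper carries out this step on $M$ itself by pure obstruction theory, in three moves. First, Proposition~\ref{1-obstruction}: if $f_*([M]_{ko})=0$ then $M$ is inessential, because the primary obstruction to compressing $f$ into $B\pi^{(n-1)}$ is the class $\bar f_*(1)\in\pi_n(B\pi/B\pi^{(n-1)})$, and a nonzero such class is detected by $ko_n$ of the quotient (using the minimal cell structure of the $(n-1)$-connected, low-dimensional skeleton quotient, Proposition~\ref{n+1/n-1}), contradicting $f_*([M]_{ko})=0$. Second, Lemma~\ref{inessmap}: for an inessential $M$ one can choose the classifying map with $f(M)\subset B\pi^{(n-1)}$ and $f(M^{(n-1)})\subset B\pi^{(n-2)}$; this uses the Lusternik--Schnirelmann characterization of essentiality (Theorem~\ref{essential-LS}) and the universality of the Berstein--Svarc class to kill the obstruction $o^{n-1}$, plus the contractibility of $E\pi^{(n-2)}$ in $E\pi^{(n-1)}$ to extend over the top cell. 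Third, Lemma~\ref{2-obstruction}: the single remaining obstruction to pushing $f$ into $B\pi^{(n-2)}$ lies in $\pi_n(B\pi/B\pi^{(n-2)})$; since $n>3$ it survives to $\pi_n^s$, and by Proposition~\ref{pi-ko-iso} the map $\pi_n^s(B\pi/B\pi^{(n-2)})\to ko_n(B\pi/B\pi^{(n-2)})$ is an isomorphism, so a nonzero obstruction would again force a nonzero image of $[M]_{ko}$, a contradiction. This chain replaces your Atiyah--Hirzebruch/surgery plan and is exactly the content your proposal is missing; note also that without it your argument does not even recover case (A)/(B) type compressions, and the $n=3$ case is covered separately by Gromov--Lawson rather than by this scheme.
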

Here $\dim_{\mc}$ stands for the {\em macroscopic dimension} \cite
{G1}. First time this conjecture was stated in the famous "filling"
paper \cite{G2} in a different language. In \cite{GL} the conjecture
was proved for 3-manifolds.

\begin{defin}
A map $f:X\to K$ of a metric space is called {\em uniformly
cobounded} if there is $D>0$ such that $diam(f^{-1}(y))\le D$.

A metric space $X$ has the {\em macroscopic dimension} $\dim_{\mc} X \leq n$ if
there is a uniformly cobounded proper continuous map $f:X\to K^n$ to
an $n$-dimensional polyhedron.
\end{defin}
\smallskip

In \cite{G1} Gromov asked the following questions related to his
conjecture which were stated in \cite{Bol1},\cite{Bol2} in the form
of a conjecture:

\begin{conjec}[{\bf C1}] Let $(M^n,g)$ be a closed Riemannian
$n$-manifold with torsion free fundamental group, and let
$\widetilde M^n$ be the universal covering of $M^n$ with the
pull-back metric. Suppose that $\dim_{\mc} \widetilde M^n <n$. Then

(A) If $\dim_{\mc} \widetilde M^n <n$ then $\dim_{\mc} \widetilde
M^n\le n-2$.

(B) If a classifying the universal covering map $f:M^n\to B\pi$ can
be deformed to am map with $f(M^n)\subset B\pi^{(n-1)}$, then it can
be deformed to a map with $f(M^n)\subset B\pi^{(n-2)}$.
\end{conjec}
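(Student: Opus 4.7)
The plan is to split the argument along the statement of C1 and to reduce (A) to (B) via a standard equivalence between macroscopic-dimension bounds and skeletal deformations of the classifying map.

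\textbf{Step 1: Reduction of (A) to (B).} For a closed $n$-manifold $M$ with torsion-free $\pi_1(M)=\pi$, I would first prove the equivalence
$$\dim_{\mc}\wt M^n \le k \iff c\colon M^n\to B\pi \text{ is homotopic to a map with image in } B\pi^{(k)}.$$
The forward implication promotes a uniformly cobounded proper continuous $\phi\colon\wt M^n\to K^k$ to a $\pi$-equivariant map (using a cocompact fundamental domain and equivariant simplicial approximation) and passes to the quotient to obtain a factorization of $c$ through a $k$-dimensional subcomplex of $B\pi$. The reverse lifts a factorization to the universal cover, where uniform coboundedness follows from compactness of $M^n$. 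Given this equivalence, (A) is precisely the $k=n-1$ case of (B).

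\textbf{Step 2: Proof of (B).} Suppose $c(M^n)\subset B\pi^{(n-1)}$. Since $\dim B\pi^{(n-1)}=n-1$, one has $c_*[M^n]=0$ in $H_n(B\pi)$. The primary obstruction to deforming $c$ further into $B\pi^{(n-2)}$ lies in a cohomology group of $M^n$ with coefficients determined by the pair $(B\pi^{(n-1)},B\pi^{(n-2)})$. I would translate this obstruction into the cellular language of the free $\Z\pi$-resolution $C_*\to\Z$ arising from $E\pi$, and apply the vanishing of $c_*[M^n]$ via Poincar\'e duality to kill the top obstruction. Remaining obstructions in lower dimensions are handled inductively by exploiting structural restrictions on the low-dimensional attaching maps of $B\pi$ imposed by torsion-freeness of $\pi$.

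\textbf{Main obstacle.} The central difficulty is that obstruction theory for the inclusion $B\pi^{(n-2)}\hookrightarrow B\pi^{(n-1)}$ generally produces secondary classes whose vanishing is not automatic from $c_*[M^n]=0$ alone. Torsion-freeness of $\pi$ must enter essentially, and it is unclear whether it alone suffices. The most promising route, consistent with the methods of the present paper, is to bypass direct obstruction computation and convert the compression problem into a $K$-homological vanishing statement: represent $[M^n]$ by a geometric $ko$-cycle, push forward to $ko_n(B\pi)$, and exploit Strong-Novikov-type injectivity together with injectivity of the periodization map $per\colon ko_n(B\pi)\to KO_n(B\pi)$. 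This approach would at least establish C1 under the same hypotheses as the main theorem of this paper, but settling the full conjecture under torsion-freeness alone appears to require a genuinely new ingredient.
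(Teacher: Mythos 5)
This statement is a \emph{conjecture}, not a theorem of the paper, and the paper itself records that it is \emph{false} for $n>3$: immediately after stating {\bf C1} the authors note that it was disproved by a counterexample of Bolotov \cite{Bol2} (the counterexample happens not to admit positive scalar curvature, which is why it does not contradict Gromov's Conjecture~\ref{Grom}). So no proof along your lines, or any other, can close the argument; the honest conclusion is that the obstacle you flag in your final paragraph is not a technical difficulty but a genuine failure of the statement.

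Concretely, the breakdown is in your Step 2. Your Step 1 reduction and the observation that $c_*[M^n]=0$ in $H_n(B\pi)$ are fine, and the paper shows (Lemma~\ref{inessmap}, via the universality of the Berstein--\v{S}varc class and Poincar\'e duality) that this vanishing lets you compress the $(n-1)$-skeleton $M^{(n-1)}$ into $B\pi^{(n-2)}$. But the remaining obstruction for the single top cell lives in $\pi_n(B\pi/B\pi^{(n-2)})_{\pi}\cong\pi_n(B\pi/B\pi^{(n-2)})$, and nothing forces it to vanish: $c_*[M^n]=0$ controls only ordinary homology, while this group sees stable homotopy in the metastable range. Torsion-freeness of $\pi$ does not help here. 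The paper's actual route (Lemma~\ref{2-obstruction}) kills this obstruction by assuming the much stronger hypothesis $f_*([M]_{ko})=0$ in $ko_n(B\pi)$ and detecting the obstruction through the composition $\pi_n(B\pi/B\pi^{(n-2)})\to\pi_n^s(B\pi/B\pi^{(n-2)})\to ko_n(B\pi/B\pi^{(n-2)})$, which is injective on the relevant element by Proposition~\ref{pi-ko-iso}. That $ko$-vanishing is supplied by positive scalar curvature together with the Strong Novikov Conjecture and injectivity of $per$ --- exactly the route you sketch at the end --- but what it proves is Theorem~\ref{main} for PSC spin manifolds, not {\bf C1}.
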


The Conjecture {\bf C1} is proven for $n = 3$ by D. Bolotov in \cite
{Bol1}. In \cite {Bol2} it was disproved for $n>3$ by a
counterexample. It turns out that Bolotov's example does not admit a
metric of positive scalar curvature \cite{Bol3} and hence it does
not affect the Gromov Conjecture~\ref{Grom}.

\

Perhaps the most famous conjecture on manifolds of positive scalar
curvature is

{\bf The Gromov-Lawson Conjecture} \cite{GL}: {\em A closed spin
manifold $M^n$ admits a metric of positive scalar curvature if and only if
$f_*([M]_{KO})=0$ in $KO_n(B\pi)$ where $f:M^n\to B\pi$ is a
classifying map for the universal covering of $M^n$.}

J. Rosenberg connected the Gromov-Lawson conjecture with the Novikov
conjecture. Namely, he proved \cite{R} that $\alpha f_*([M]_{KO})=0$
in $KO_n(C^*(\pi))$ in the presence of positive scalar curvature where $\alpha$
is the assembly map. 

\begin{conjec}[{\bf Strong Novikov Conjecture}]\label{SNC} The analytic assembly map
$$\alpha: KO_* (B\pi)\To KO_*(C^*(\pi))$$ is a monomorphism.
\end{conjec}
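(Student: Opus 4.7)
The plan is to approach injectivity of the assembly map $\alpha$ through Kasparov's equivariant $KK$-theoretic machinery, which is essentially the only framework that has produced broad partial results toward SNC. The overall strategy is to construct a \emph{dual-Dirac element} that gives a left inverse to $\alpha$, thereby yielding split injectivity, which is strictly stronger than the bare injectivity demanded by the conjecture.

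Concretely, I would first identify $KO_*(B\pi)$ with the equivariant $KO$-homology $KO_*^\pi(E\pi)$ via the free $\pi$-action on the universal cover, so that $\alpha$ becomes the evaluation of a Kasparov product against a canonical assembly class. Passing to a model $X$ of the classifying space $\underline{E\pi}$ for proper actions, one seeks an element $\beta \in KK^\pi(C_0(X),\R)$ built from a $\pi$-equivariant real Clifford bundle with a Bott-like symbol at infinity. Pairing $\beta$ with the Dirac (assembly) element $D$ via the Kasparov intersection product produces a $\gamma$-element $\gamma = \beta \otimes D \in KK^\pi(\R,\R)$; if one can establish $\gamma = 1$ there, then descent to the real group $C^*$-algebra yields a left inverse to $\alpha$ and proves the conjecture for $\pi$. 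A $\beta$ with $\gamma = 1$ is known to exist under various geometric hypotheses on $\pi$: bolicity and non-positive curvature (Kasparov-Skandalis), a-T-menability (Higson-Kasparov), and, most broadly, coarse embeddability of $\pi$ into Hilbert space (Yu, Skandalis-Tu-Yu).

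The main obstacle — and it is a fundamental one — is precisely that no such construction is available for an arbitrary discrete group, and there is currently no injectivity method for $\alpha$ outside of the dual-Dirac framework. Gromov's random-group constructions produce finitely generated groups containing expanders for which the Baum-Connes conjecture with coefficients fails; whether SNC itself can fail for such groups remains open but is widely considered plausible. Consequently any honest proof proposal must be conditional: reduce SNC to verifying a geometric hypothesis on $\pi$ (coarse embeddability into Hilbert space being the broadest currently sufficient condition) and thereby obtain the conjecture on the very large class of groups where such a hypothesis is known to hold. A proof of SNC in full generality lies beyond the reach of presently available techniques, which is exactly why the main theorem of this paper takes SNC as a hypothesis rather than establishing it.
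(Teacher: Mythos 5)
This item is labeled a \emph{Conjecture} in the paper, and the paper offers no proof of it: the Strong Novikov Conjecture is taken as \emph{hypothesis} (1) of the main theorem, precisely because it is an open problem. So there is nothing in the paper to compare your argument against, and your proposal --- as you yourself state in its last paragraph --- is not a proof but a survey of why a proof is currently unavailable. That self-assessment is accurate. The dual-Dirac/$\gamma$-element outline you give (construct $\beta$ with $\gamma=\beta\otimes D=1$ in $KK^\pi(\R,\R)$, descend to get a left inverse to $\alpha$, hence split injectivity) is the correct description of how all known broad partial results toward SNC are obtained, and the sufficient geometric conditions you list (bolicity, a-T-menability, coarse embeddability into Hilbert space) are the right ones. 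But none of this establishes the statement for an arbitrary discrete group $\pi$, which is what the conjecture asserts; the ``proof'' terminates in an acknowledged reduction to unproved hypotheses.

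Two smaller cautions if you pursue this further. First, the conjecture here is stated for real $K$-theory, $KO_*(B\pi)\to KO_*(C^*(\pi))$, while the results of Higson--Kasparov, Yu, and Skandalis--Tu--Yu are usually formulated for complex $K$-theory; transferring injectivity statements from $KU$ to $KO$ requires additional (known but nontrivial) arguments. Second, you slide between $B\pi=E\pi/\pi$ with its free action and the classifying space $\underline{E\pi}$ for proper actions; these agree only for torsion-free $\pi$, and the paper imposes no torsion-freeness, so the reduction of the $B\pi$-assembly map to the $\underline{E\pi}$-assembly map needs to be stated explicitly. Neither point changes the verdict: the statement remains a conjecture, and the correct reading of the paper is that it is assumed, not proved.
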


Then Rosenberg and Stolz proved the Gromov-Lawson conjecture for
manifolds with the fundamental group $\pi$ which satisfies the
Strong Novikov conjecture and has the natural transformation map
$$per:ko_*(B\pi)\to KO_*(B\pi)$$ injective (\cite{RS}, Theorem 4.13).

The main goal of this paper is to prove the Gromov
Conjecture~\ref{Grom} under the Rosenberg-Stolz conditions.

\section{Connective spectra and n-connected complexes}

We refer to the textbook \cite{Ru} on the subject of spectra. We
recall that for every spectrum $\E$ there is a connective cover
$\e\to \E$, i.e., the spectrum $\e$ with the morphism $\e\to\E$ that
induces the isomorphisms for $\pi_i(\e)=\pi_i(\E)$ for $i\ge 0$ and
with $\pi_i(\e)=0$ for $i<0$ . By $KO$ we denote the spectrum for
real $K$-theory, by $ko$ its connective cover, and by $per:ko\to KO$
the corresponding transformation (morphism of spectra). We will use
both notations for an $\E$-homology of a space $X$: old-fashioned
$\E_*(X)$ and modern $ H_*(X;\E)$. We recall that $KO_n(pt)=\Z$ if
$n=0$ or $n=4$ mod 8, $KO_n(pt)=\Z_2$ if $n=1$ or $n=2$ mod 8, and
$KO_n(pt)=0$ for all other values of $n$. By $\S$ we denote the
spherical spectrum. Note that for any spectrum $\E$ there is a
natural morphism $\S\to\E$ which leads to the natural transformation
of the stable homotopy to $\E$-homology $\pi^s_*(X)\to H_*(X;\E)$.

\begin{prop}\label{n+1/n-1}
Let $X$ be an $(n-1)$ connected $(n+1)$-dimensional CW complex. Then
$X$ is homotopy equivalent to the wedge of spheres of dimensions $n$
and $n+1$ together with the Moore spaces $M(\Z_m,n)$.
\end{prop}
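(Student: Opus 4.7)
The plan is to choose a minimal CW model for $X$, encode the attaching data as an integer matrix, diagonalize that matrix via Smith normal form, and realize each algebraic basis change as a homotopy equivalence of $X$. I tacitly assume $n\geq 2$; the statement fails for $n=1$ (a $2$-torus is $0$-connected and $2$-dimensional but not of the claimed form).

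Since $X$ is $(n-1)$-connected, CW approximation lets me replace $X$ up to homotopy equivalence by a complex with a single $0$-cell and cells only in dimensions $n$ and $n+1$. The $n$-skeleton is then a wedge $X^{(n)}=\bigvee_{\alpha}S^n$, and $X$ is built by attaching $(n+1)$-cells via maps $\phi_\beta:S^n\to X^{(n)}$. The Hurewicz theorem identifies $\pi_n(X^{(n)})\cong H_n(X^{(n)})\cong\bigoplus_\alpha\Z$, so the collection $\{[\phi_\beta]\}$ amounts to an integer matrix $M:\bigoplus_\beta\Z\to\bigoplus_\alpha\Z$, namely the cellular boundary $\partial_{n+1}$ of $X$.

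The elementary divisor theorem provides automorphisms $P\in\Aut(\bigoplus_\alpha\Z)$ and $Q\in\Aut(\bigoplus_\beta\Z)$ with $PMQ^{-1}$ diagonal. Realizing $P$ is straightforward: a cellular self-map $p$ of $\bigvee_\alpha S^n$ inducing $P$ on $\pi_n$ exists (pointed maps out of a wedge are specified summand-by-summand), and is a homotopy equivalence by the homology Whitehead theorem, since $\bigvee_\alpha S^n$ is simply connected with homology concentrated in degree $n$. Replacing each $\phi_\beta$ by $p\circ\phi_\beta$ yields a homotopy equivalent complex. To realize $Q$ it suffices to realize elementary column operations, i.e.\ replacing one attaching class $\phi_\beta$ by $\phi_\beta+\phi_{\beta'}$. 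The key observation is that inside the intermediate complex $Y=X^{(n)}\cup_{\phi_{\beta'}}D^{n+1}_{\beta'}$ the composite $\phi_{\beta'}\colon S^n\to X^{(n)}\hookrightarrow Y$ is null-homotopic (its own disk bounds it), so $\phi_\beta+\phi_{\beta'}$ is homotopic in $Y$ to $\phi_\beta$; attaching the remaining $\beta$-cell along this modified map therefore yields a complex homotopy equivalent to $X$.

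Once $M$ is diagonal, the decomposition is read off: zero columns give $S^{n+1}$ summands, zero rows give $S^n$ summands, a diagonal entry $m_i\geq 2$ pairs an $n$-cell with an $(n+1)$-cell via a degree $m_i$ attaching map to produce a Moore space $M(\Z_{m_i},n)$, and diagonal entries $\pm 1$ give contractible pairs that drop out. I expect the main obstacle to be the geometric realization of the basis changes, and especially of the column operations, since that is where one must convert an algebraic manipulation into a change of attaching maps that preserves homotopy type; the trick of absorbing $\phi_{\beta'}$ into its own bounding disk is the crucial move. When the index sets are infinite one performs the elementary operations one at a time, which is sufficient to bring $M$ into diagonal form.
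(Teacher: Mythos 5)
Your route is genuinely different from the paper's: the paper simply cites Hatcher's Minimal Cell Structure theorem (Proposition 4C.1 and Example 4C.2), whereas you give a self-contained argument by diagonalizing the cellular boundary matrix via Smith normal form and realizing row operations by self-equivalences of $\bigvee S^n$ (Whitehead) and column operations by the ``absorb $\phi_{\beta'}$ into its own bounding disk'' trick. For complexes with finitely many cells (or, more generally, finitely generated homology) this works and is a nice elementary alternative; the column-operation move and the final read-off of the diagonal form are correct, and your $n\ge 2$ caveat is a point the paper leaves implicit. One step you should tighten: plain CW approximation does not give a model of dimension $n+1$ with cells only in dimensions $n$ and $n+1$ --- the generic construction has no dimension bound. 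You need either a cell-trading argument (trade each $k$-cell, $k\le n-1$, for a $(k+2)$-cell, which stays within dimension $n+1$) or the homology-decomposition model built from a free resolution of $H_n(X)$; this preliminary reduction is essentially what the cited Hatcher result supplies, so in a self-contained proof it deserves an actual argument rather than the phrase ``CW approximation.''

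The genuine gap is your last sentence about infinite index sets. An infinite integer matrix cannot in general be brought to diagonal form, even by arbitrary automorphisms $P,Q$ of the direct sums: a diagonalizable $\partial_{n+1}$ forces $H_n(X)=\mathrm{coker}\,\partial_{n+1}$ to be a direct sum of cyclic groups, and cokernels of maps of infinitely generated free abelian groups need not be of this type (they can be $\Q$ or $\Z[1/2]$, for instance); moreover an infinite sequence of elementary operations performed ``one at a time'' need not assemble into a basis change at all. In fact the statement itself fails for infinite complexes: the mapping telescope of the degree~$2$ self-maps $S^n\to S^n\to\cdots$ is an $(n-1)$-connected $(n+1)$-dimensional CW complex with $H_n\cong\Z[1/2]$, hence not a wedge of $S^n$'s, $S^{n+1}$'s and $M(\Z_m,n)$'s. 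So no argument can close that gap; the proposition needs a finiteness hypothesis (present in Hatcher's theorem, which assumes finitely generated homology), and since the paper applies it to quotients of skeleta of $B\pi$, which may be infinite, the honest fix in the application is to pass to finite subcomplexes and use that $\pi^s_*$ and $ko_*$ commute with directed colimits. Within the finitely generated setting, your proof is correct.
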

\begin{proof}
It is a partial case of the Minimal Cell Structure Theorem (see
Proposition 4C.1 and Example 4C.2 in\cite{Hatcher}).
\end{proof}

\begin{prop}\label{pi-ko-iso}
The natural transformation $\pi_*^s(pt)\to ko_*(pt)$ induces an
isomorphism $\pi_{n}^s(K/K^{(n-2)})\to ko_{n}(K/K^{(n-2)})$ for any CW
complex $K$.
\end{prop}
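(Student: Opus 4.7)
The plan is to analyze the cofiber $F$ of the unit morphism $\S\to ko$ and exploit the connectivity of $K/K^{(n-2)}$. Smashing the cofiber sequence $\S\to ko\to F$ with $X:=K/K^{(n-2)}$ yields a long exact sequence
\begin{equation*}
\pi_{n+1}(F\wedge X)\to \pi_n^s(X)\to ko_n(X)\to \pi_n(F\wedge X),
\end{equation*}
so it suffices to prove that $\pi_n(F\wedge X)$ and $\pi_{n+1}(F\wedge X)$ both vanish.

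First I would analyse the connectivity of $F$. From the tables of stable stems and $ko_*(pt)$ one has $\pi_0^s=\Z\to ko_0=\Z$ (the unit), $\pi_1^s=\Z/2\to ko_1=\Z/2$ (both generated by $\eta$), and $\pi_2^s=\Z/2\to ko_2=\Z/2$ (both generated by $\eta^2$); each of these maps is an isomorphism. Feeding this into the long exact sequence associated to $\S\to ko\to F$, together with $\pi_{-1}(\S)=0$, immediately gives $\pi_i(F)=0$ for $i\le 2$, i.e.\ $F$ is $2$-connected as a spectrum.

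Next I would note that $X=K/K^{(n-2)}$ has cells only in dimensions $\ge n-1$, so $\Sigma^{\infty}X$ is $(n-2)$-connected. Combining this with the $2$-connectivity of $F$ via the standard estimate for smash products of spectra (if $A$ is $a$-connected and $B$ is $b$-connected then $A\wedge B$ is $(a+b+1)$-connected), one obtains that $F\wedge X$ is at least $(n+1)$-connected. Consequently $\pi_n(F\wedge X)=0=\pi_{n+1}(F\wedge X)$, which by the displayed long exact sequence forces $\pi_n^s(X)\to ko_n(X)$ to be an isomorphism.

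The only potentially delicate point is the low-dimensional comparison in Step 1; once the explicit identifications of $\eta$ and $\eta^2$ with the generators of $ko_1$ and $ko_2$ are in hand, the rest of the argument is a formal manipulation of the cofibre sequence together with the connectivity estimate for smash products, both of which are completely standard.
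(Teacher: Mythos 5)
Your argument is correct, but it follows a genuinely different route from the paper. You work entirely at the spectrum level: form the cofibre $F$ of the unit $\S\to ko$, observe that the unit is an isomorphism on homotopy in degrees $0,1,2$ (via $1$, $\eta$, $\eta^2$), so $F$ is $2$-connected, and then combine this with the $(n-2)$-connectivity of $K/K^{(n-2)}$ and the smash-product connectivity estimate to kill $\pi_n$ and $\pi_{n+1}$ of $F\wedge (K/K^{(n-2)})$, which by the long exact sequence gives the isomorphism. The paper instead argues space by space: using connectivity of $\pi^s$ and $ko$ it reduces to the skeletal quotient $K^{(n+1)}/K^{(n-2)}$, compares the exact sequences of the pair $(K^{(n+1)}/K^{(n-2)},K^{(n)}/K^{(n-2)})$, then invokes the minimal cell structure result (Proposition~\ref{n+1/n-1}) to write $K^{(n)}/K^{(n-2)}$ as a wedge of spheres $S^{n-1}$, $S^n$ and Moore spaces $M(\Z_m,n-1)$, and finally checks the map on each summand, handling the Moore spaces by the Five Lemma applied to the cofibration $S^{n-1}\to S^{n-1}\to M(\Z_m,n-1)$. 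The inputs differ slightly: the paper only needs the coefficient comparison in stable degrees $0$ and $1$, while your argument also uses the degree-$2$ isomorphism $\pi_2^s\cong ko_2$ (both $\Z/2$ generated by $\eta^2$, and the unit is a ring map, so this is fine); in exchange you avoid the wedge decomposition and the skeletal bookkeeping, and your proof applies verbatim to any $(n-2)$-connected spectrum in place of $\Sigma^\infty(K/K^{(n-2)})$. Both proofs are complete; yours is the more formal and arguably more robust one, the paper's is more elementary in that it stays with spaces, cells and the Five Lemma.
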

\begin{proof}
Since $\pi^s$ and $ko$ are both connective, it suffices to show that
$$\pi_{n}^s(K^{(n+1)}/K^{(n-2)})\to ko_{n}(K^{(n+1)}/K^{(n-2)})$$ is an
isomorphism. Consider the diagram generated by exact sequences of
the pair $(K^{(n+1)}/K^{(n-2)},K^{(n)}/K^{(n-2)})$
$$
\begin{CD}
\oplus\Z @>>> \pi_n^s(K^{(n})/K^{(n-2)})
@>>>\pi_n^s(K^{(n+1)}/K^{(n-2)}) @>>> 0\\
@VVV @VVV @VVV @VVV\\
\oplus \Z @>>> ko_n(K^{(n)}/K^{(n-2)}) @>>>
ko_n(K^{(n+1)}/K^{(n-2)}) @>>> 0.\\
\end{CD}
$$
Since the left vertical arrow is an isomorphism and the right
vertical arrow is an isomorphism of zero groups, it suffices to show
That $\pi_{n}^s(K^{(n)}/K^{(n-2)})\to ko_{(n)}(K^{(n)}/K^{(n-2)})$ is an
isomorphism.

Note that $\pi_{n}(S^k)\to ko_{n}(S^k)$ is an isomorphism for
$k=n, n-1$. In view of Proposition~\ref{n+1/n-1} it suffices to show
that $\pi_{n}^s(M(\Z_m,n-1))\to ko_{n}(M(\Z_m,n-1))$ is an
isomorphism for any $m$ and $n$. This follows from the Five Lemma
applied to the co-fibration $S^{n-1}\to S^{n-1}\to M(\Z_m,n-1)$.
\end{proof}

\section{Inessential manifolds}

We recall the following definition which is due to Gromov.
\begin{defin}
An $n$-manifold $M$ is called {\em essential} if it does not admit a
map $f:M\to K^{n-1}$ to an $(n-1)$-dimensional complex that induces
an isomorphism of the fundamental groups. Note that always one can
take $K^{n-1}$ to be the $(n-1)$-skeleton $B\pi^{(n-1)}$ of the
classifying space $B\pi$ of the fundamental group $\pi=\pi_1(M)$.

If a manifold is not essential, it is called {\em inessential}.
\end{defin}
The following is well-known to experts.

\begin{prop}\label{equivalent inessent}
An orientable $n$-manifold $M$ is inessential if and only if
$f_*([M])\in H_n(B\pi)$ is zero for a map $f:M\to B\pi_1(M)$
classifying the universal covering of $M$.
\end{prop}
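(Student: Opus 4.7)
If $M$ is inessential, there is $f': M \to K^{n-1}$ inducing an isomorphism on $\pi_1$. The classifying map $K^{n-1} \to B\pi$ can be made cellular, so it factors through $B\pi^{(n-1)}$ (since $\dim K^{n-1} = n-1$); composing with $f'$ yields $h: M \to B\pi^{(n-1)}$ such that $\iota h$ and $f$ induce the same map on $\pi_1$. Since $B\pi = K(\pi,1)$ they are homotopic, so $f_*[M] = \iota_* h_*[M] \in H_n(B\pi^{(n-1)}) = 0$ for dimensional reasons.

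\textbf{Reverse direction.} Because $B\pi$ is aspherical, it suffices to produce $h: M \to B\pi^{(n-1)}$ with $\iota_* \circ h_* = f_*$ on $\pi_1$; then $\iota h \simeq f$ and $M$ is inessential. I would build $h$ skeleton-by-skeleton, realizing $\rho := f_*: \pi_1(M) \to \pi$ (assuming $n \ge 3$; the cases $n \le 2$ admit direct treatment). The obstructions to extending a partial lift over the $k$-cells lie in $H^k(M; \pi_{k-1}(B\pi^{(n-1)}))$ with local coefficients via $\rho$. For $k = 2$, the $2$-cell attaching loops lie in $\ker(\pi_1(M^{(1)}) \twoheadrightarrow \pi_1(M))$, so $\rho$ sends them to $0 \in \pi$ and the extension exists. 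For $3 \le k \le n-1$ one has $\pi_{k-1}(B\pi^{(n-1)}) \cong \pi_{k-1}(B\pi) = 0$ by cellular approximation and the asphericity of $B\pi$, so these obstructions vanish.

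The only nontrivial obstruction, to extending over the top cells, lies in $H^n(M; L)$ with $L := \pi_{n-1}(B\pi^{(n-1)})$. By Hurewicz applied to the $(n-2)$-connected universal cover $\wt{B\pi^{(n-1)}}$ together with the contractibility of $\wt{B\pi}$, one identifies $L \cong H_{n-1}(\wt{B\pi^{(n-1)}}) \cong H_n(\wt{B\pi},\wt{B\pi^{(n-1)}}) \cong C_n(\wt{B\pi})/\partial C_{n+1}(\wt{B\pi})$ as $\Z[\pi]$-modules. Poincar\'e duality for orientable $M$ then gives $H^n(M; L) \cong L_\pi \cong C_n(B\pi)/\partial C_{n+1}(B\pi) \cong H_n(B\pi, B\pi^{(n-1)})$, into which $H_n(B\pi)$ embeds via the long exact sequence of the pair (since $H_n(B\pi^{(n-1)}) = 0$).

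The crux, and main technical obstacle, is to identify the primary obstruction class with the image of $f_*[M]$ under this embedding. I would take the partial lift $h|_{M^{(n-1)}} := f|_{M^{(n-1)}}$ after making $f$ cellular; then the obstruction cochain assigns to a top-dimensional cell $e$ of $M$ (with attaching map $\phi_e$) the class $[f \circ \phi_e] \in L$, which under the Hurewicz iso is represented by the cellular chain $\wt{f}_\#(\wt{e}) \in C_n(\wt{B\pi})$ modulo $\partial C_{n+1}(\wt{B\pi})$. Capping with $[M]$ and descending to coinvariants produces $f_\#[M]^{\mathrm{cell}} \in C_n(B\pi)/\partial C_{n+1}(B\pi)$, which is exactly the image of $f_*[M]$. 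With this identification in hand, the hypothesis $f_*[M] = 0$ forces the obstruction to vanish, so $h$ extends over all of $M$ and $M$ is inessential.
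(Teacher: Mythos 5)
Your proposal is correct, and both directions reach the same conclusion the paper does via primary obstruction theory plus Poincar\'e duality; but the mechanism you use to tie the obstruction to $f_*([M])$ is genuinely different. The paper never computes the coefficient module explicitly: it observes that $o_n(f)=f^*(o_n(1_{B\pi}))$ for the universal obstruction $o_n(1_{B\pi})\in H^n(B\pi;\pi_{n-1}(F))$, applies the projection formula $f_*([M]\cap f^*x)=f_*([M])\cap x$, and uses that $f_*\colon H_0(M;\pi_{n-1}(F))\to H_0(B\pi;\pi_{n-1}(F))$ is an isomorphism of coinvariants (because $f$ is a $\pi_1$-isomorphism), so $f_*([M])=0$ forces $[M]\cap o_n(f)=0$ and hence $o_n(f)=0$ by duality. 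You instead identify the coefficient module concretely, $\pi_{n-1}(B\pi^{(n-1)})\cong H_n(E\pi,E\pi^{(n-1)})\cong C_n(E\pi)/\partial C_{n+1}(E\pi)$, pass to coinvariants, and show the Poincar\'e dual of the obstruction is exactly the image of $f_*([M])$ under the embedding $H_n(B\pi)\hookrightarrow H_n(B\pi,B\pi^{(n-1)})$; this is precisely the chain-level identification the paper itself performs later, in the proof of Proposition~\ref{1-obstruction}, where the class is read off as $\bar f_*(1)\in\pi_n(B\pi/B\pi^{(n-1)})$. The trade-off: the paper's naturality-plus-projection-formula argument avoids your ``crux'' bookkeeping entirely, while your explicit identification makes the obstruction visibly equal to the image of the fundamental class, which is what generalizes to $ko_*$ and is the engine of the rest of the paper. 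Two small points you gloss over but should record: vanishing of the obstruction \emph{class} (not cocycle) only lets you extend after redefining the map on the $(n-1)$-cells rel the $(n-2)$-skeleton, so you must note this does not disturb the induced $\pi_1$-isomorphism; and your preliminary skeleton-by-skeleton construction of $h$ is redundant once you set $h|_{M^{(n-1)}}=f|_{M^{(n-1)}}$ for a cellular $f$.
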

\begin{proof}
If $M$ admits a classifying map $f:M\to B\pi^{(n-1)}$, then clearly,
$f_*([M])=0$.

Let $f_*([M])=0$ for some map $f:M\to B\pi_1(M)$ that induces an
isomorphism of the fundamental groups. Let $o_n(f)\in
H^n(M;\pi_{n-1}(F))$ be the primary obstruction to deform $f$ to the
$(n-1)$-dimensional skeleton $B\pi^{(n-1)}$ and let
$o_n(1_{B\pi})\in H^n(B\pi;\pi_{n-1}(F))$ be the primary obstruction
to retraction of $B\pi$ to the $(n-1)$-skeleton. Here $F$ denotes
the homotopy fiber of the inclusion $B\pi^{(n-1)}\to B\pi$ and
$\pi_{n-1}(F)$ is considered as a $\pi$-module. Since $f_*$ induce
an isomorphism of the fundamental groups,
$f_*:H_0(M;\pi_{n-1}(F))=\pi_{n-1}(F)_{\pi}\to
H_0(B\pi;\pi_{n-1}(F))=\pi_{n-1}(F)_{\pi}$ is an isomorphism. Then
$f_*([M]\cap o_n(f))=f_*([M])\cap o_n(1_{B\pi})=0$. By the Poincare
duality  $o_n(f)=0$.
\end{proof}

\begin{prop}\label{1-obstruction}
An orientable spin $n$-manifold $M$ is inessential if
$f_*([M]_{ko})\in ko_n(B\pi)$ is zero for a map $f:M\to B\pi$
classifying the universal covering.
\end{prop}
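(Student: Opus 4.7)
The plan is to invoke Proposition~\ref{equivalent inessent} to reduce the claim to the ordinary-homology vanishing $f_*([M])=0$ in $H_n(B\pi;\Z)$, and then push the hypothesis through a tower of natural transformations of spectra.

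First I would replace $B\pi$ by the quotient $Y := B\pi/B\pi^{(n-2)}$, with collapse $q: B\pi \to Y$. Since $\dim B\pi^{(n-2)}\le n-2$, both $H_{n-1}(B\pi^{(n-2)})$ and $H_n(B\pi^{(n-2)})$ vanish, and the long exact sequence of the pair shows that $q_*: H_n(B\pi;\Z) \to H_n(Y;\Z)$ is injective. It therefore suffices to show $(qf)_*([M])=0$ in $H_n(Y;\Z)$.

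Next I would exploit the natural maps of spectra $\S \To ko \To H\Z$ (the second being the Postnikov truncation to $\pi_0$), whose composition is the classical Hurewicz transformation. Applied to $Y$, these yield the commutative triangle
\[
\pi_n^s(Y)\To ko_n(Y)\stackrel{e_Y}{\To}H_n(Y;\Z),
\]
and Proposition~\ref{pi-ko-iso} identifies the first arrow as an isomorphism. Since $M$ is spin, the analogous edge $e_M: ko_n(M)\to H_n(M;\Z)$ carries $[M]_{ko}$ to $[M]$; this compatibility comes from the Atiyah--Bott--Shapiro $ko$-Thom class of the stable normal bundle being compatible with the ordinary Thom class under $ko\to H\Z$.

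With these ingredients in place, naturality of $e$ yields
\[
(qf)_*([M]) \;=\; (qf)_*\bigl(e_M([M]_{ko})\bigr) \;=\; e_Y\bigl((qf)_*([M]_{ko})\bigr) \;=\; e_Y(0) \;=\; 0,
\]
where the last step uses $(qf)_*([M]_{ko}) = q_*f_*([M]_{ko}) = q_*(0)=0$ by hypothesis. Injectivity of $q_*$ on $H_n$ then gives $f_*([M])=0$, and Proposition~\ref{equivalent inessent} concludes. The main obstacle is the spin compatibility $e_M([M]_{ko}) = [M]$: this is the sole non-formal step and is the only place where the spin hypothesis enters. Proposition~\ref{pi-ko-iso} is the structural input that makes the argument work on the $(n-2)$-connective quotient $Y$, where the bridge $\pi^s\cong ko$ lets one deploy the Hurewicz picture cleanly.
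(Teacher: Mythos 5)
Your argument is correct, but it is a genuinely different route from the paper's. You reduce the statement to ordinary homology: by Proposition~\ref{equivalent inessent} it suffices to show $f_*([M])=0$ in $H_n(B\pi;\Z)$, and you get this from naturality of the Postnikov truncation $ko\to H\Z$ together with the fact that this ring map carries the Atiyah--Bott--Shapiro class $[M]_{ko}$ of a spin manifold to $\pm[M]$ (the one genuinely non-formal input, standard but not proved in the paper; it follows since both classes restrict to generators of $E_n(M,M\setminus pt)$ and $ko_0(pt)\to H_0(pt)$ is an isomorphism). Note that your detour through $Y=B\pi/B\pi^{(n-2)}$ and the appeal to Proposition~\ref{pi-ko-iso} are actually never used in your chain of equalities: naturality applied directly to $f$ already gives $f_*([M])=e_{B\pi}(f_*([M]_{ko}))=0$, so that part can be deleted. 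The paper instead argues by obstruction theory: it identifies the primary obstruction to compressing $f$ into $B\pi^{(n-1)}$ with $\bar f_*(1)\in\pi_n(B\pi/B\pi^{(n-1)})$ for the collapsed map $\bar f:M/M^{(n-1)}=S^n\to B\pi/B\pi^{(n-1)}$, and uses the wedge decomposition of Proposition~\ref{n+1/n-1} to show that a nonzero obstruction forces $\bar f_*\ne 0$ on $ko_n$, contradicting $f_*([M]_{ko})=0$ via the collapse diagram. Your version is shorter and avoids redoing obstruction theory (beyond what Proposition~\ref{equivalent inessent} already contains), at the price of importing the $ko\to H\Z$ compatibility of fundamental classes; the paper's version builds exactly the machinery (obstruction equals $\bar f_*(1)$, detection of nontriviality in $ko_n$ of a quotient complex) that is reused in Lemma~\ref{2-obstruction}, where your ordinary-homology shortcut would not suffice, since compressing below the top skeleton to $B\pi^{(n-2)}$ cannot be detected by $H\Z$ alone.
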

\begin{proof}
We assume that $M$ is given a CW complex structure with one
$n$-dimensional cell and $f(M^{(n-1)})\subset B\pi^{(n-1)}$. Let
$$c^n_f:C_n(\Wi M)=\pi_n(M,M^{(n-1)})\to\pi_{n-1}(B\pi^{(n-1)})$$ be
the primary obstruction cocycle for extending $f|_{M^{(n-1)}}$ to
the $n$-cell. In view of the $\pi$-isomorphism
$\pi_{n}(B\pi,B\pi^{(n-1)})=\pi_{n-1}(B\pi^{(n-1)})$ we may assume
that $c^n_f:\pi_n(M,M^{(n-1)})\to\pi_{n}(B\pi,B\pi^{(n-1)})$ is the
induced by $f$ homomorphism of the homotopy groups. The class
$o^n_f=[c^n_f]$ of $c^n_f$ lives in the cohomology group
$H^n(M;\pi_n(B\pi,B\pi^{(n-1)})$ with coefficients in a
$\pi$-module. By the Poincare duality
$H^n(M;\pi_n(B\pi,B\pi^{(n-1)}))=H_0(M;\pi_n(B\pi,B\pi^{(n-1)}))=
H_0(\pi;\pi_n(B\pi,B\pi^{(n-1)}))$. The later group is the group of
$\pi$-invariants of $\pi_n(B\pi,B\pi^{(n-1)})$ which is equal to the
group $\pi_n(B\pi/B\pi^{(n-1)})$. Then class $[c^n_f]$ in
$\pi_n(B\pi/B\pi^{(n-1)})$ coincides with $\bar f_*(1)$ where $\bar
f:M/M^{(n-1)}=S^n\to B\pi/B\pi^{(n-1)}$ is the induced map.

Assume that the obstruction $o^n_f=[c^n_f]\ne 0$. Then we claim that
$\bar f_*$ induces a nontrivial homomorphism for
$ko$ in dimension $n$. In view of connectivity of $ko$, 
it suffices to show this for
the map $\bar f:S^n\to B\pi^{(n+1)}/B\pi^{(n-1)}$. By
Proposition~\ref{n+1/n-1} $B\pi^{(n+1)}/B\pi^{(n-1)}=(\vee S^n)\vee(\vee
M(\Z_{m_i},n))\vee(\vee S^{n+1})$. Thus, it suffices to show that a
non-nullhomotopic maps $S^n\to S^n$ and $S^n\to M(\Z_m,n)$ induce
nontrivial homomorphisms for $ko_n$. The first is obvious, the
second follows from the homotopy excision and the Five Lemma applied
to the following diagram.
$$
\begin{CD}
\pi_n(S^n) @>>> \pi_n(S^n) @>>> \pi_n(M(\Z_m,n)) @>>> 0\\
@VVV @VVV @VVV @VVV\\
ko_n(S^n) @>>> ko_n(S^n) @>>> ko_n(M(\Z_m,n)) @>>> 0.\\
\end{CD}
$$
By the definition of the fundamental class the image of $[M]_{ko}$
is a generator in
$ko_n(M,M^{(n-1)})=ko_n(M/M^{(n-1)})=ko_n(S^n)=\Z$. Then the
following commutative diagram leads to the contradiction
$$
\begin{CD}
ko_n(M) @>f_*>> ko_n(B\pi)\\
@Vq_*VV @Vp_*VV\\
ko_n(M/M^{(n-1)}) @>\bar f_*>> ko_n(B\pi/B\pi^{(n-1)}).\\
\end{CD}
$$
\end{proof}

There are many ways to detect essentiality of manifolds. One of them
deals with the Lusternik-Schnirelmann category of $X$, $\cat X$,
which is the minimal $m$ such that $X$ admits an open cover
$U_0,\dots, U_m$ contractible in $X$.
\begin{thm}\label{essential-LS}
A closed $n$-manifold is essential if and only if its
Lusternik-Schnirelmann category equals $n$.
\end{thm}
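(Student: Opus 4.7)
Since $\dim M = n$ gives the classical bound $\cat M \le n$, the statement $\cat M = n$ is equivalent to $\cat M \ge n$, and the task is to match this lower bound with essentiality. The plan is to route both implications through the Berstein--\v{S}varc cohomological class. Let $I\pi \subset \Z[\pi]$ denote the augmentation ideal and consider the short exact sequence $0 \to I\pi \to \Z[\pi] \to \Z \to 0$ of $\pi$-modules. Its connecting homomorphism applied to $1 \in H^0(B\pi;\Z)$ produces the Berstein class $\omega_\pi \in H^1(B\pi; I\pi)$. Pulling back along the classifying map $f\colon M \to B\pi$ yields $v := f^*\omega_\pi \in H^1(M; I\pi)$; the cup power $v^n \in H^n(M; (I\pi)^{\otimes n})$ is the crucial invariant.

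The essential direction goes as follows. First I would apply the Berstein--\v{S}varc inequality, the cup-length lower bound for LS-category with local coefficients: $v^n \ne 0$ forces $\cat M \ge n$. Next I would identify $v^n$ with the primary obstruction to deforming $f$ into $B\pi^{(n-1)}$, reusing the obstruction-theoretic mechanism from the proof of Proposition~\ref{1-obstruction} (the coefficient module $\pi_n(B\pi, B\pi^{(n-1)})$ receives a natural map from $(I\pi)^{\otimes n}$ and the top obstruction cocycle corresponds to $v^n$ under this map). By Poincar\'e duality with local coefficients, this obstruction vanishes if and only if $f_*[M] = 0$, which by Proposition~\ref{equivalent inessent} is exactly inessentiality. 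Thus $M$ essential $\Rightarrow v^n \ne 0 \Rightarrow \cat M \ge n$, giving equality in the essential case.

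For the converse, suppose $M$ is inessential, so $f$ factors up to homotopy as $g\colon M \to B\pi^{(n-1)}$ inducing an isomorphism of fundamental groups. Since $\dim B\pi^{(n-1)} = n-1$, one has $\cat B\pi^{(n-1)} \le n-1$, giving a categorical cover $\{V_i\}_{i=0}^{n-1}$ which pulls back to a cover $U_i := g^{-1}(V_i)$ of $M$ on which $f$ is null-homotopic. Upgrading null-homotopy of $f|_{U_i}$ to contractibility of each $U_i$ inside $M$ is the main obstacle: a null-homotopy of $f|_{U_i}$ gives only a lift of $U_i \hookrightarrow M$ to the universal cover $\widetilde M$, not a contraction of $U_i$ inside $M$ itself. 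The clean resolution is to invoke the converse Berstein--\v{S}varc theorem for closed $n$-manifolds (Rudyak; see also Katz--Rudyak), which states that $v^n = 0 \Rightarrow \cat M \le n-1$; combined with the identification of $v^n$ with the essentiality obstruction established above, this completes the inessential case and hence the theorem.
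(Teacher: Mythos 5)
The paper gives no argument for Theorem~\ref{essential-LS} at all: it simply refers the reader to \cite{CLOT}. Your sketch reconstructs the standard proof of that cited result, and its architecture is sound: both essentiality and the equality $\cat M=n$ are equivalent to the nonvanishing of $v^n=f^*(\beta_{\pi})^n\in H^n(M;I(\pi)^{\otimes n})$. The cup-length bound gives $v^n\neq 0\Rightarrow \cat M\ge n$; the Ganea-fibration obstruction argument (your ``converse Berstein--\v{S}varc theorem'', which holds for any $n$-dimensional complex, see \cite{Sv}, \cite{CLOT}, \cite{DR}) gives $v^n=0\Rightarrow \cat M\le n-1$; and obstruction theory over the $n$-dimensional $M$ ties $v^n$ to essentiality. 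This is exactly the mechanism the paper itself exploits one dimension lower in Lemma~\ref{inessmap}, so relative to the paper you are supplying detail rather than diverging from it; the one genuinely hard step ($v^n=0\Rightarrow\cat M\le n-1$) remains outsourced to the same literature the paper cites, which is fair.

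Two corrections, though. First, the detour through Poincar\'e duality and Proposition~\ref{equivalent inessent} is unnecessary and would silently impose orientability, which the theorem does not assume: since $\dim M=n$, the primary obstruction $o^n\in H^n(M;\pi_n(B\pi,B\pi^{(n-1)}))$ is the \emph{only} obstruction to compressing $f$ into $B\pi^{(n-1)}$, so $M$ is essential if and only if $o^n\neq 0$, with no duality and no fundamental class needed. Second, watch the direction of the universality homomorphism: it is a coefficient map $I(\pi)^{\otimes n}\to\pi_n(B\pi,B\pi^{(n-1)})$ carrying $v^n$ to $o^n$, so it yields $v^n=0\Rightarrow o^n=0$ (hence essential $\Rightarrow v^n\neq 0$), but it does \emph{not} deliver the implication you need in the converse step, namely inessential $\Rightarrow v^n=0$; ``the identification established above'' cannot be run backwards. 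That implication follows instead from the compression itself: if $f$ is homotopic to a map with image in $B\pi^{(n-1)}$, then $v^n$ is pulled back from $H^n(B\pi^{(n-1)};I(\pi)^{\otimes n})=0$ because $B\pi^{(n-1)}$ is $(n-1)$-dimensional. With these two adjustments your argument is complete modulo the quoted converse Berstein--\v{S}varc theorem.
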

We refer to \cite{CLOT} for the proof and more facts about the
Lusternik-Schnirelmann category. Note that $\cat X$ is estimated
from below by the cup-length of $X$ possible with twisted
coefficient and its estimated from above by the dimension of $X$.
The definition of the Lusternik-Schnirelmann category can be
reformulated in terms of existence of a section of some universal
fibration (called Ganea's fibration). The characteristic class
arising from the universal Ganea fibration over the classifying
space $B\pi$ is called the Berstein-Svartz class $\beta_{\pi}\in
H^1(\pi;I(\pi))$ of $\pi$ where $I(\pi)$ the augmentation ideal of
the group ring $\Z(\pi)$ (see \cite{Ber},\cite{Sv},\cite{CLOT}.
Formally, $\beta_{\pi}$ is the image of the generator under
connecting homomorphism $H^0(\pi;\Z)\to H^1(\pi;I(\pi))$ in the long
exact sequence generated by the short exact sequence of coefficients
$$0\to I(\pi)\to\Z(\pi)\to\Z\to 0.$$
The main property of $\beta_{\pi}$ is universality: Every cohomology
class $\alpha\in H^k(\pi;L)$ is the image of $(\beta_{\pi})^k$ under
a suitable coefficients homomorphism
$I(\pi)^k=I(\pi)\otimes\dots\otimes I(\pi)\to L$. We refer to
\cite{DR} (see also \cite{Sv}) for more details.
\begin{lem}\label{inessmap}
Let $M$ be a closed inessential $n$-manifold, $n\ge 4$, supplied with a CW
complex structure and let $\pi=\pi_1(M)$. Then $M$ admits a
classifying map $f:M\to B\pi$ of the universal covering such that
$f(M)\subset B\pi^{(n-1)}$ and $f(M^{(n-1)})\subset B\pi^{(n-2)}$.
\end{lem}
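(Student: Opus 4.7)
Plan. I will use inessentiality to obtain a classifying map $f : M \to B\pi^{(n-1)}$, then modify it by a cellular adjustment constructed via a $\pi$-equivariant chain homotopy supplied by the contractibility of $E\pi$. By Proposition~\ref{equivalent inessent}, take $f$ cellular with $f(M^{(i)}) \subset B\pi^{(i)}$ for all $i$; the lift $\widetilde f : \widetilde M \to E\pi^{(n-1)}$ induces a $\pi$-equivariant cellular chain map $\widetilde f_\# : C_*(\widetilde M) \to C_*(E\pi)$ that vanishes on $C_n(\widetilde M)$ since $E\pi^{(n-1)}$ has no $n$-cells. Because $n \ge 4$, $E\pi^{(n-2)}$ is $(n-3)$-connected and simply connected, so relative Hurewicz identifies $\pi_{n-1}(B\pi^{(n-1)}, B\pi^{(n-2)}) \cong C_{n-1}(E\pi)$ and $\pi_{n-2}(B\pi^{(n-2)}) \cong Z_{n-2}(E\pi)$ as $\pi$-modules. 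Under these identifications the cell-by-cell obstruction to pushing $f|_{M^{(n-1)}}$ rel $M^{(n-2)}$ into $B\pi^{(n-2)}$ is exactly $\widetilde f_\#|_{C_{n-1}(\widetilde M)}$, and the goal reduces to modifying $f$ so that the induced obstruction on each $(n-1)$-cell becomes zero.

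The key step is to modify $f|_{M^{(n-2)}}$ so that the resulting chain map annihilates $B_{n-2}(\widetilde M)$ in degree $n-2$. Since $C_*(\widetilde M)$ is $\Z[\pi]$-free and $E\pi$ is contractible, there is a $\pi$-equivariant chain homotopy $s : C_*(\widetilde M) \to C_{*+1}(E\pi)$ with $\partial s + s\partial = \widetilde f_\#$. Set $\delta' := -\partial s|_{C_{n-2}(\widetilde M)}$. Because $E\pi$ is acyclic, $B_{n-2}(E\pi) = Z_{n-2}(E\pi) = \pi_{n-2}(B\pi^{(n-2)})$, so $\delta'$ is a valid $\pi$-cochain of modifications on $(n-2)$-cells. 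Evaluated on $\partial \widetilde c \in B_{n-2}(\widetilde M)$ with $s(\partial\partial\widetilde c) = 0$, the chain-homotopy relation yields $\delta'(\partial \widetilde c) = -\partial s(\partial \widetilde c) = -\widetilde f_\#(\partial \widetilde c)$, so after replacing $f|_{M^{(n-2)}}$ by the map modified via $\delta'$, the new $\widetilde f_\#$ vanishes on $B_{n-2}(\widetilde M)$.

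It remains to extend the modified $f$ successively: over each $(n-1)$-cell $e$ of $M$ into $B\pi^{(n-2)}$, where the obstruction $[f|_{\partial e}] \in \pi_{n-2}(B\pi^{(n-2)}) = Z_{n-2}(E\pi)$ equals $\widetilde f_\#(\partial \widetilde e) = 0$; and over each $n$-cell $E$ of $M$ into $B\pi^{(n-1)}$, where the obstruction $[f|_{\partial E}] \in \pi_{n-1}(B\pi^{(n-1)}) = Z_{n-1}(E\pi)$ factors through $H_{n-1}(E\pi^{(n-2)}) = 0$ for dimensional reasons. The resulting $f$ satisfies $f(M^{(n-1)}) \subset B\pi^{(n-2)}$ and $f(M) \subset B\pi^{(n-1)}$, and remains classifying since $\pi_{n-2}(B\pi) = 0$. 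The main technical difficulty lies in the chain homotopy $s$, which converts a potentially obstructed $\pi$-module extension problem into the concrete formula $\delta' = -\partial s$ and supplies admissible modifications along the entire $(n-2)$-skeleton simultaneously.
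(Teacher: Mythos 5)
The decisive step of your argument is false. There is no $\pi$-equivariant chain homotopy $s$ with $\partial s+s\partial=\widetilde f_{\#}$: that would be a null-homotopy of $\widetilde f_{\#}$, whereas $\widetilde f_{\#}$ induces an isomorphism $H_0(\widetilde M)=\Z\to H_0(E\pi)=\Z$ and a null-homotopic chain map induces zero there. Contractibility of $E\pi$ only makes the \emph{augmented} complex acyclic, so the comparison theorem gives that any two augmentation-preserving $\pi$-chain maps $C_*(\widetilde M)\to C_*(E\pi)$ are chain homotopic to \emph{each other}, never to $0$; and a homotopy $\partial s+s\partial=\widetilde f_{\#}-g_{\#}$ with another classifying chain map $g_{\#}$ does not produce your $\delta'$. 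Nor can one hope for the partial homotopy you actually use (maps $s_{n-2},s_{n-3}$ with $\partial s_{n-2}+s_{n-3}\partial=\widetilde f_{\#}$ on $C_{n-2}$) by soft homological algebra: producing $\delta'$ with $\delta'(\partial\widetilde c)=-\widetilde f_{\#}(\partial\widetilde c)$ is exactly the assertion that the compression obstruction class in $H^{n-1}(M;\pi_{n-2}(B\pi^{(n-2)}))$ vanishes, and that is not automatic.

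Indeed, your argument nowhere uses that $M$ is a closed manifold, nor inessentiality beyond the existence of a skeletal classifying map into $B\pi^{(n-1)}$, so it would prove: for every $n$-dimensional complex $X$ and cellular $\pi_1$-isomorphism $X\to B\pi^{(n-1)}$ preserving skeleta, the $(n-1)$-skeleton compresses into $B\pi^{(n-2)}$. This is false. Take $\pi=\Z_2$, $X=\R P^{n-1}\vee S^n$, and the map which is the identity on the summand $\R P^{n-1}=X^{(n-1)}$ and constant on $S^n$; a compression would make the inclusion $\R P^{n-1}\hookrightarrow \R P^{\infty}$ homotopic to a map into $\R P^{n-2}$, contradicting $w^{n-1}\neq 0$ in $H^{n-1}(\R P^{n-1};\Z_2)$ (one can also check directly in the standard $\Z[\Z_2]$-resolution that no equivariant $\delta'$ exists, since it would force $2a=-1$). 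So the vanishing of the obstruction class genuinely requires the manifold and inessentiality input: the paper obtains it from $\cat M\le n-1$, Poincar\'e duality and the universality of the Berstein--Svarc class, showing $f^*(\beta_{\pi}^{\,n-1})=0$ and that the obstruction is a coefficient image of $\beta_{\pi}^{\,n-1}$. Your identifications of the obstructions with chain-level data via relative Hurewicz, and your final extension over the $n$-cells (which amounts to $\pi_{n-1}(E\pi^{(n-1)})\cong H_{n-1}(E\pi^{(n-1)})$ and $H_{n-1}(E\pi^{(n-2)})=0$, equivalent to the paper's contractibility argument), are fine; the gap is the middle step, and it is not repairable within your framework.
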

\begin{proof}
Let $f:M\to B\pi$ be the
classifying map for the universal covering of $M$. We may assume that
$f|_{M^{(2)}:M^{(2)}}\to B\pi^{(2)}$ is the identity map.
First we show that $f^*(\beta^{n-1})=0$ where 
$\beta=\beta_{\pi}$ is the Berstein-Svarz class of $\pi$. Assume
that $ f^*(\beta^{n-1})\ne 0$. Then $a=[M]\cap f^*(\beta^{n-1})\ne
0$ by the Poincare Duality Theorem~\cite{Br}. There is $u\in
H^1(X;\A)$ such that $a\cap u\ne 0$ for some local system $\A$ (see
Proposition 2.3~\cite{DKR}). Then $f^*(\beta)^{n-1}\cup u\ne 0$.
Thus the twisted cup-length of $M$ is at least $n$ and hence $\cat
M=n$. It contradicts to the Theorem~\ref{essential-LS} .

Let $g:M\to B\pi^{(n-1)}$ be a cellular classifying map. Consider
the obstruction $o^{n-1}\in H^{n-1}(M;\pi_{n-2}(B\pi^{(n-2)}))$ for
extension of $g|_{M^{(n-2)}}$ to the $(n-1)$-skeleton $M^{(n-1)}$.
Here $\pi_{n-2}(B\pi^{(n-2)})$ is considered as a $\pi$-module. Note
that $o^{n-1}=g^*(o_B^{n-1})$ where $o_B^{n-1}$ is the obstruction for
retraction of $B\pi^{(n-1)}$ to $B\pi^{(n-2)}$. In view of the
universality of the Berstein-Svarz class there is a morphisms of
$\pi$-modules $I(\pi)^{n-1}\to \pi_{n-2}(B\pi^{(n-2)})$ such that
$o_B^{n-1}$ is the image of $\beta^{n-1}$ under the induced
cohomology homomorphism. The square diagram induced by $g$ and the
fact $g^*(\beta_{\pi}^{n-1})=0$ imply that $o^{n-1}=0$. Therefore
there is a map $f':M^{(n-1)}\to B\pi^{(n-2)}$ that coincides with
$g$ on $M^{(n-3)}$. Clearly, for $n\ge 5$, this map induces an
isomorphism of the fundamental groups. It is still the case for $n=4$, 
since $g$ is the identity on the 1-skeleton and
the 2-skeletons of $M$ and $B\pi$ are taken to be the same.

We show that there is an
extension $f:M\to B\pi^{(n-1)}$. It suffices to show that the
inclusion homomorphism $\pi_{n-1}(B\pi^{(n-2)})\to
\pi_{n-1}(B\pi^{(n-1)})$ is trivial. This homomorphism coincides
with the homomorphism
$$\pi_{n-1}(E\pi^{(n-2)})\to \pi_{n-1}(E\pi^{(n-1)}).$$
Since $E\pi^{(n-2)}$ is contractible in $E\pi$, by the Cellular Approximation
Theorem it is contractible in $E\pi^{(n-1)}$. This implies that 
the inclusion homomorphism is zero.
\end{proof}

\section{The Main Theorem}

\begin{lem}\label{2-obstruction}
Suppose that a classifying map $f:M\to B\pi$ of a closed spin
$n$-manifold, $n>3$, takes the $ko_*$ fundamental class to 0,
$f_*([M]_{ko})=0$. Then $f$ is homotopic to a map $g:M\to
B\pi^{(n-2)}$.
\end{lem}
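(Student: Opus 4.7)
The plan is to argue in direct analogy with \propref{1-obstruction}, shifting one skeleton lower. Since $f_*([M]_{ko})=0$, \propref{1-obstruction} lets me replace $f$ by a homotopic map with image in $B\pi^{(n-1)}$; applying \lemref{inessmap} (valid because $n \geq 4$), I can further homotope so that $f(M) \subset B\pi^{(n-1)}$ and $f(M^{(n-1)}) \subset B\pi^{(n-2)}$. Give $M$ a CW structure with a single top $n$-cell so that $M/M^{(n-1)} = S^n$, and let $\bar f : S^n \to B\pi/B\pi^{(n-2)}$ be the induced map on quotients; this is well defined precisely because $f(M^{(n-1)}) \subset B\pi^{(n-2)}$.

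Next is the obstruction-theoretic heart of the argument. Since $M$ is $n$-dimensional, the only obstruction to homotoping $f$ rel $M^{(n-1)}$ into $B\pi^{(n-2)}$ is the primary class $o^n_f \in H^n(M; \pi_n(B\pi, B\pi^{(n-2)}))$. Because $M$ is spin, hence orientable, Poincar\'e duality identifies this group with the $\pi$-coinvariants $\pi_n(B\pi, B\pi^{(n-2)})_{\pi}$, and by the same kind of universal-cover analysis used in the proof of \propref{1-obstruction} these coinvariants can in turn be identified with $\pi_n(B\pi/B\pi^{(n-2)})$; under this identification $o^n_f$ corresponds to $\bar f_*(1)$. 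For $n > 3$ the space $B\pi/B\pi^{(n-2)}$ is $(n-2)$-connected, so Freudenthal suspension places us in the stable range $\pi_n(B\pi/B\pi^{(n-2)}) \cong \pi^s_n(B\pi/B\pi^{(n-2)})$, and \propref{pi-ko-iso} applied with $K = B\pi$ provides the further isomorphism $\pi^s_n(B\pi/B\pi^{(n-2)}) \cong ko_n(B\pi/B\pi^{(n-2)})$.

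It therefore suffices to show $\bar f_*([S^n]_{ko}) = 0$ in $ko_n(B\pi/B\pi^{(n-2)})$, and this is immediate from the commutative square
$$
\begin{CD}
ko_n(M) @>f_*>> ko_n(B\pi) \\
@Vq_*VV @Vr_*VV \\
ko_n(S^n) @>\bar f_*>> ko_n(B\pi/B\pi^{(n-2)})
\end{CD}
$$
(whose commutativity uses precisely $f(M^{(n-1)}) \subset B\pi^{(n-2)}$, forcing $r \circ f$ to factor through the pinch map $q:M\to M/M^{(n-1)}=S^n$) together with the hypothesis $f_*([M]_{ko}) = 0$: indeed $\bar f_*([S^n]_{ko}) = \bar f_*(q_*[M]_{ko}) = r_*(f_*[M]_{ko}) = 0$, so $o^n_f$ vanishes and the desired homotopy exists. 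The main technical obstacle will be the Poincar\'e-duality identification of $\pi_n(B\pi, B\pi^{(n-2)})_{\pi}$ with $\pi_n(B\pi/B\pi^{(n-2)})$, together with the matching of $o^n_f$ with $\bar f_*(1)$; this is the exact analogue, shifted one skeleton down, of the identification carried out in the proof of \propref{1-obstruction}, and I expect it to follow by passing to the universal cover $E\pi$ (where $E\pi^{(n-2)}$ is $(n-3)$-connected, hence simply connected for $n \geq 4$) and applying relative Hurewicz, using that $E\pi^{(n-1)}/E\pi^{(n-2)}$ is a wedge of $(n-1)$-spheres.
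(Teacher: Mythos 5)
Your argument is correct and follows the paper's proof essentially verbatim: reduce to $f(M)\subset B\pi^{(n-1)}$, $f(M^{(n-1)})\subset B\pi^{(n-2)}$ via Proposition~\ref{1-obstruction} and Lemma~\ref{inessmap}, identify the single remaining obstruction with $\bar f_*(1)\in\pi_n(B\pi/B\pi^{(n-2)})$, use $n>3$ (Freudenthal) and Proposition~\ref{pi-ko-iso} to pass isomorphically to $ko_n$, and kill it with the commutative square and $f_*([M]_{ko})=0$. The only cosmetic difference is that you argue directly that the obstruction vanishes, whereas the paper assumes it nonzero and derives a contradiction.
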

\begin{proof}
In view of Proposition~\ref{1-obstruction} we may assume that
$f(M)\subset B\pi^{(n-1)}$. In view of Lemma~\ref{inessmap} we may
additionally assume that $f(M^{(n-1)})\subset B\pi^{(n-2)}$. Also we
assume that $M$ has one $n$-dimensional cell. As in the proof of
Proposition~\ref{1-obstruction} we can say that the primary
obstruction for moving $f$ into the $(n-2)$-skeleton is defined by
the cocycle $c_f:\pi_n(M,M^{(n-1)})\to\pi_n(B\pi,B\pi^{n-2})$ which
defines the cohomology class $o_f=[c_f]$ that lives in the group of
coinvariants
$\pi_n(B\pi,B\pi^{(n-2)})_{\pi}=\pi_n(B\pi/B\pi^{(n-2)})$ and is
represented by $\bar f_*(1)$ for the homomorphism $\bar
f_*:\pi_n(M/M^{(n-1)})=\Z\to\pi_n(B\pi/B\pi^{(n-2)})$ induced by the
map of quotient spaces $\bar f:M/M^{(n-1)}\to B\pi/B\pi^{(n-2)}$.

We assume that the obstruction $[c_f]$ is nonzero. Show that $\bar
f_*:ko_n(S^n)\to ko_n(B\pi/B\pi^{n-2})$ is nontrivial to obtain a
contradiction as in the proof of Proposition~\ref{1-obstruction}.
Thus, $\bar f_*(1)$ defines a nontrivial element of
$\pi_n(B\pi/B\pi^{(n-2)})$. The restriction $n>3$ implies that $\bar
f_*(1)$ survives in the stable homotopy group. In view of
Proposition~\ref{pi-ko-iso}, the element $\bar f_*(1)$ survives in
the composition
$$\pi_n(B\pi/B\pi^{(n-2)}) \to \pi_n^s(B\pi/B\pi^{(n-2)})\to
ko_n(B\pi/B\pi^{(n-2)}).$$

The commutative diagram
$$
\begin{CD}
\pi_n(S^n) @>\bar f_*>> \pi_n(B\pi/B\pi^{(n-2)})\\
@V\cong VV @V\cong VV\\
\pi_n^s(S^n) @>\bar f_*>> \pi_n^s(B\pi/B\pi^{(n-2)})\\
@V\cong VV @V\cong VV\\
ko_n(S^n) @>\bar f_*>> ko_n(B\pi/B\pi^{(n-2)})\\
\end{CD}
$$
implies that $f_*(1)\ne 0$ for $ko_n$. Contradiction.
\end{proof}

\smallskip
The Strong Novikov Conjecture is connected to the Gromov Conjecture
by means of the following theorem which is due to J. Rosenberg.

\begin{thm}[\cite{R}]\label{Rosenberg}
Suppose $M^n$ is a spin manifold with a fundamental group $\pi$. Let
$f$ be classifying map $f:M^n\to B\pi$. If $M^n$ is a positive
scalar curvature manifold then $\alpha f_*([M^n]_{KO})=0$ where
$\alpha: KO_* (B\pi)\To KO_*(C^*_r(\pi))$ is the analytic assembly
map.
\end{thm}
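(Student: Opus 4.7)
The plan is to follow Rosenberg's original strategy, which combines the Lichnerowicz--Weitzenb\"ock vanishing principle with the higher (Mishchenko--Fomenko) index theorem for Dirac operators twisted by $C^*$-algebra bundles.

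First, I would form the Mishchenko--Fomenko bundle
$$\mathcal{V}_f \;=\; \widetilde{M}\times_{\pi} C^*_r(\pi)$$
over $M$, viewed as a bundle of finitely generated projective right Hilbert $C^*_r(\pi)$-modules pulled back via $f$ from the canonical bundle over $B\pi$. Twisting the spin Dirac operator $D_M$ (in its real/Clifford-linear version, so that the index is valued in $KO$) by the flat connection on $\mathcal{V}_f$ gives a $C^*_r(\pi)$-linear elliptic operator $D_{\mathcal{V}_f}$ whose analytic index lies in $KO_n(C^*_r(\pi))$. The Mishchenko--Fomenko index theorem, in its real $KO$-theoretic form, then identifies this analytic index with $\alpha\bigl(f_*([M]_{KO})\bigr)$; this identification is the bridge between the topological and analytic sides of the assembly map.

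Next, I would apply the Lichnerowicz--Weitzenb\"ock formula to the twisted operator:
$$D_{\mathcal{V}_f}^{\,2} \;=\; \nabla^*\nabla \;+\; \tfrac{R}{4} \;+\; \mathcal{R}^{\mathcal{V}_f},$$
where $R$ is the scalar curvature of $M$ and $\mathcal{R}^{\mathcal{V}_f}$ is the curvature contribution of $\mathcal{V}_f$. Since $\mathcal{V}_f$ carries the flat connection coming from the deck-transformation action, $\mathcal{R}^{\mathcal{V}_f} = 0$, and under the hypothesis $R>0$ the operator on the right is bounded below by $(\min_M R)/4 > 0$ in the sense of operators on Hilbert $C^*_r(\pi)$-modules. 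Hence $D_{\mathcal{V}_f}$ is invertible, so its analytic index in $KO_n(C^*_r(\pi))$ vanishes. Combined with the preceding identification, this gives $\alpha(f_*([M]_{KO})) = 0$, as required.

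The main obstacle is not the Weitzenb\"ock step, which is essentially formal once one has set up twisted Dirac operators correctly, but rather the Mishchenko--Fomenko index identification in the real $KO$-theoretic setting: one must define the analytic index of a Clifford-linear Dirac operator over a real $C^*$-algebra, verify that it is well-defined modulo compact perturbations of Hilbert $C^*$-modules, and check its compatibility with the assembly map $\alpha$. All the real-vs-complex care is here; the rest of the proof reduces to invoking these foundations and noting that flatness of $\mathcal{V}_f$ kills the curvature correction in Lichnerowicz.
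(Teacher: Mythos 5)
Your outline is correct and is essentially Rosenberg's own argument: the paper gives no proof of this statement, citing \cite{R}, and the proof there is exactly the Lichnerowicz--Weitzenb\"ock vanishing for the $Cl_n$-linear Dirac operator twisted by the flat Mishchenko--Fomenko bundle $\widetilde{M}\times_{\pi}C^*_r(\pi)$, together with the identification of its index in $KO_n(C^*_r(\pi))$ with $\alpha(f_*([M]_{KO}))$. So your proposal matches the cited source's approach, including your correct observation that the genuine technical content lies in the real $KO$-theoretic index/assembly compatibility rather than in the Weitzenb\"ock step.
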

\begin{thm}\label{main}
Suppose that a discrete group $\pi$ has the following properties:

1. The Strong Novikov Conjecture holds for $\pi$.

2. The natural map $per:ko_n(B\pi)\to KO_n(B\pi)$ is injective.

Then the Gromov Conjecture holds for spin $n$-manifolds $M$ with the
fundamental group $\pi_1(M)=\pi$.
\end{thm}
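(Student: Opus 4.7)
The plan is to chain together the three inputs---positive scalar curvature together with hypotheses~(1) and~(2)---to force $f_*([M]_{ko}) = 0$ for the classifying map $f: M \to B\pi$, and then to invoke Lemma~\ref{2-obstruction} and lift the resulting homotopical statement to the universal cover. Concretely, let $M$ be a closed spin $n$-manifold of positive scalar curvature with $\pi_1(M)=\pi$. Rosenberg's Theorem~\ref{Rosenberg} gives $\alpha f_*([M]_{KO})=0$ in $KO_n(C^*_r(\pi))$; the Strong Novikov Conjecture upgrades this to $f_*([M]_{KO})=0$ in $KO_n(B\pi)$; and the naturality of the periodicity morphism combined with the injectivity of $per:ko_n(B\pi)\to KO_n(B\pi)$ from hypothesis~(2) yields $f_*([M]_{ko})=0$ in $ko_n(B\pi)$.

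Next I would feed this vanishing into Lemma~\ref{2-obstruction}. For $n>3$ the lemma produces a map $g:M\to B\pi^{(n-2)}$ homotopic to $f$ (the cases $n\leq 3$ are either vacuous or already handled by Gromov--Lawson, so we may assume $n\geq 4$). Lifting $g$ to universal covers produces a $\pi$-equivariant continuous map
\[
\tilde g : \tilde M \longrightarrow E\pi^{(n-2)},
\]
whose target is the universal cover of the $(n-2)$-skeleton of $B\pi$---a polyhedron of dimension at most $n-2$ once $B\pi$ is chosen to be a simplicial complex.

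Finally, I would verify that $\tilde g$ is proper and uniformly cobounded, which will give $\dim_{\mc}(\tilde M,\tilde g)\leq n-2$ and thus complete the proof of Conjecture~\ref{Grom} under our hypotheses. Both $\tilde M$ and $E\pi^{(n-2)}$ carry free, properly discontinuous, cocompact, isometric $\pi$-actions, and $\tilde g$ is $\pi$-equivariant. Fixing a compact fundamental domain $F\subset \tilde M$, the image $\tilde g(F)$ is compact and so meets only finitely many translates of a fundamental domain for $E\pi^{(n-2)}$; continuity on $F$ bounds the diameters of point-preimages of $\tilde g|_F$ uniformly, and then equivariance propagates these bounds over all of $\tilde M$, giving a single $D$ with $\mathrm{diam}(\tilde g^{-1}(\tilde y))\leq D$ for every $\tilde y\in E\pi^{(n-2)}$. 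The bulk of the argument is already absorbed in Lemma~\ref{2-obstruction}; the main obstacle that remains in the present assembly is the last step, where one must translate the homotopical conclusion (that $f$ factors through the $(n-2)$-skeleton) into the coarse-geometric bound on $\dim_{\mc}$, and this is precisely what cocompactness of the $\pi$-action together with equivariance of $\tilde g$ are designed to supply.
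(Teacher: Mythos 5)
Your proposal is correct and follows essentially the same route as the paper: Rosenberg's theorem plus the Strong Novikov hypothesis plus injectivity of $per$ give $f_*([M]_{ko})=0$, then Lemma~\ref{2-obstruction} deforms $f$ into $B\pi^{(n-2)}$, and the lifted map $\Wi M\to E\pi^{(n-2)}$ yields $\dim_{\mc}\Wi M\le n-2$ (the paper states this last coarse step in one sentence, which you usefully expand). One small correction to your expansion: the $\pi$-action on $E\pi^{(n-2)}$ need not be cocompact when $B\pi$ is an infinite complex, but this is harmless --- proper discontinuity of that action together with compactness of $\tilde g(F)$ already gives that only finitely many translates of $F$ can meet a given point-preimage (or a compact set), which is all your properness and uniform coboundedness argument actually uses.
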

\begin{proof}
Let $M$ be a closed spin $n$-manifold that admits a metric with
positive scalar curvature. By Theorem~\ref{Rosenberg} $\alpha\circ
per\circ f_*([M]_{ko})=0$. The conditions on $\pi$ imply that
$f_*([M]_{ko})=0$ for the classifying map $f:M\to B\pi$. Then by
Lemma~\ref{2-obstruction} $f$ is homotopic to $g:M\to B\pi^{(n-2)}$.
The induced map of the universal covering spaces $\Wi M\to
E\pi^{(n-2)}$ produces the inequality $\dim_{\mc}\Wi M\le n-2$.
\end{proof}

\begin{cor}
The Gromov Conjecture holds for spin $n$-manifolds $M$ with the
fundamental group $\pi_1(M)=\pi$ having $cd(\pi)\le n+3$ and
satisfying the Strong Novikov Conjecture.
\end{cor}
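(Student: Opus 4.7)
The plan is to reduce the corollary directly to Theorem~\ref{main}. The Strong Novikov Conjecture is provided as a hypothesis, so I only need to verify the second condition of that theorem, namely that $per:ko_n(B\pi)\to KO_n(B\pi)$ is injective under the assumption $\cd(\pi)\le n+3$.

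I would compare $ko$ and $KO$ through their cofiber. Let $C$ denote the cofiber of the morphism $per:ko\to KO$. Because $per$ induces an isomorphism on $\pi_i$ for $i\ge 0$, the long exact sequence of homotopy groups shows that $\pi_i(C)=0$ for $i\ge 0$ and $\pi_i(C)=KO_i(pt)$ for $i<0$. Smashing with $B\pi_+$ yields the long exact homology sequence
\[
C_{n+1}(B\pi)\to ko_n(B\pi)\xrightarrow{per_*}KO_n(B\pi),
\]
so the injectivity of $per_n$ follows from the vanishing of $C_{n+1}(B\pi)$.

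To establish that vanishing I would run the Atiyah--Hirzebruch spectral sequence
\[
E^2_{p,q}=H_p(B\pi;\pi_q(C))\Longrightarrow C_{p+q}(B\pi).
\]
The only terms that can contribute to total degree $n+1$ have $q<0$ and $p=n+1-q$. By Bott periodicity $KO_q(pt)$ is nonzero only for $q\equiv 0,1,2,4\pmod 8$, so among negative $q$ the first nonzero coefficient occurs at $q=-4$; in every such case $p\ge n+5>n+3\ge \cd(\pi)$, which forces $H_p(B\pi;-)=0$. Consequently every relevant $E^2$-term vanishes, $C_{n+1}(B\pi)=0$, and $per_n$ is injective. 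Theorem~\ref{main} then supplies the Gromov conjecture for any spin $n$-manifold with fundamental group $\pi$.

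The only real obstacle is bookkeeping: one must match the cohomological dimension bound $n+3$ against the location $q=-4$ of the first revival of $KO_*(pt)$ below degree zero. The three nullities $KO_{-1}=KO_{-2}=KO_{-3}=0$ are exactly what give us room to spare, so the cohomological dimension only needs to be pushed up to $n+3$ rather than to $n$. Beyond this numerical check no geometric input is needed and the corollary follows immediately from Theorem~\ref{main}.
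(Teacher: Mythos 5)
Your proof is correct and follows essentially the same route as the paper: the paper uses the fiber $\F$ of $per:ko\to KO$ and shows $H_n(B\pi;\F)=0$ via the Atiyah--Hirzebruch spectral sequence and the vanishing $KO_{-1}=KO_{-2}=KO_{-3}=0$, which is the same argument as yours up to the shift between fiber and cofiber ($C\simeq\Sigma\F$). Your bookkeeping in total degree $n+1$ and the reduction to Theorem~\ref{main} match the paper's proof.
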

\begin{proof}
We show that $per$ is an isomorphism in dimension $n$ in this case.
Let $\F\to ko\to KO$ be the fibration of spectra induced by the
morphism $ko\to KO$. Then $\pi_k(\F)=0$ for $k\ge 0$ and
$\pi_k(\F)=\pi_k(KO)=KO_k(pt)=0$ if $k=-1,-2,-3$ mod 8. The
Atihyah-Hirzebruch $F$-homology spectral sequence for $B\pi$ implies
that $H_n(B\pi;\F)=0$  since all entries on the $n$-diagonal in the
$E^2$-term are 0. Then the coefficient exact sequence for homology
$$H_n(B\pi;\F)\to ko_n(B\pi)\to KO_n(B\pi)\to\dots $$
implies that $per:ko_n(B\pi)\to KO_n(B\pi)$ is a monomorphism.
\end{proof}
We note that this Corollary for $cd(\pi)\le n-1$ first was proven in
\cite{Bol3}.

\begin{cor}
The Gromov Conjecture holds for spin $n$-manifolds $M$ with the
fundamental group $\pi_1(M)=\pi$ having finite $B\pi$ and with
$\as\pi\le n+3$.
\end{cor}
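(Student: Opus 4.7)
The plan is to reduce to the previous corollary by showing that both of its hypotheses --- that $\pi$ satisfies the Strong Novikov Conjecture and that $\cd(\pi)\le n+3$ --- are forced by our standing assumptions that $B\pi$ is a finite complex and $\as\pi\le n+3$. Once both are verified, there is nothing left to do: the Main Theorem applied through the preceding corollary gives the result immediately.

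For the Strong Novikov Conjecture I would appeal to G.~Yu's theorem: a discrete group of finite asymptotic dimension admits a coarse (uniform) embedding into Hilbert space, and for any group embeddable in Hilbert space the Coarse Baum--Connes Conjecture holds; since $B\pi$ is assumed to have the homotopy type of a finite CW complex, this Coarse Baum--Connes statement specialises to the Strong Novikov Conjecture for $\pi$. The hypothesis $\as\pi\le n+3<\infty$ is precisely what is needed to enter Yu's framework.

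For the cohomological bound I would invoke Dranishnikov's inequality $\cd(\pi)\le \as\pi$, valid whenever $\pi$ has a finite classifying space (such a $\pi$ is torsion-free and of finite cohomological dimension, so integral and rational $\cd$ coincide and the inequality makes sense unambiguously). Combined with $\as\pi\le n+3$ this immediately yields $\cd(\pi)\le n+3$, and then the previous corollary delivers the Gromov Conjecture for spin $n$-manifolds with $\pi_1(M)=\pi$. The only genuine obstacle is citing the precise forms of the theorems of Yu and Dranishnikov needed here; beyond that the argument is a clean two-step reduction with no further calculation required.
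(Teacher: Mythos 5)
Your reduction is exactly the paper's: combine the fact that the Strong Novikov Conjecture holds for groups with finite $B\pi$ and finite asymptotic dimension, the inequality $\cd(\pi)\le\as\pi$ from \cite{Dr}, and the preceding corollary. The only difference is that you route the Strong Novikov step through Yu's coarse embedding/Coarse Baum--Connes descent argument, while the paper cites \cite{Ba} and \cite{DFW} for the same fact, which is a difference of citation rather than of proof.
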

\begin{proof}
This is a combination of the fact that the Strong Novikov conjecture
holds true for such groups $\pi$ (\cite{Ba},\cite{DFW}), the above
Corollary, and the inequality $cd(\pi)\le \as\pi$ proven in \cite{Dr}.
\end{proof}

\begin{cor}
The Gromov conjecture holds for spin $n$-manifolds $M$ with the
fundamental group $\pi_1(M)$ equal the product of free groups
$F_1\times\dots\times F_n$. In particular, it holds for free abelian
groups.
\end{cor}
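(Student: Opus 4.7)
The plan is to verify that $\pi = F_1 \times \cdots \times F_n$ satisfies the hypotheses of the preceding corollary, and then to invoke it. Concretely, I need to check three things: $B\pi$ can be taken to be finite, $\as \pi \le n+3$, and the Strong Novikov Conjecture holds for $\pi$.

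First I would check finiteness of $B\pi$. Since $M$ is a closed manifold, $\pi_1(M)$ is finitely generated, so each factor $F_i$ must be finitely generated. Then each $BF_i$ may be modeled by a finite wedge of circles, and $B\pi = BF_1\times\cdots\times BF_n$ is a finite CW complex.

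Next I would bound the asymptotic dimension. A finitely generated free group has asymptotic dimension at most $1$, and asymptotic dimension is subadditive with respect to direct products (the Bell--Dranishnikov product inequality). Hence
\[
\as\pi \;\le\; \as F_1 + \cdots + \as F_n \;\le\; n \;\le\; n+3.
\]
Combined with finiteness of $B\pi$, the results \cite{Ba}, \cite{DFW} cited in the previous corollary yield the Strong Novikov Conjecture for $\pi$. With all hypotheses in place, the previous corollary applies and delivers the Gromov conjecture for $M$.

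For the parenthetical assertion about free abelian groups, I would simply note that $\Z^k$ is the direct product of $k$ copies of the free group $\Z$, and hence falls inside the class already handled. I do not anticipate a genuine obstacle; the proof is essentially a bookkeeping exercise gluing together the known product inequality for $\as$, the finite $B\pi$ hypothesis, and the preceding corollary. The only mild subtlety is ensuring that the factors are finitely generated, which is forced by closedness of $M$.
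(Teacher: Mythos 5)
Your argument is correct, but it is a genuinely different route from the paper's. You verify the hypotheses of the immediately preceding corollary: finite generation of the factors (forced by closedness of $M$), hence finite $B\pi$, plus the product inequality for asymptotic dimension giving $\as\pi\le\as F_1+\dots+\as F_n\le n\le n+3$; the Strong Novikov Conjecture then comes along via \cite{Ba},\cite{DFW} inside that corollary. The paper instead goes back to Theorem~\ref{main} and checks hypothesis~2 directly: using the splitting $H_i(X\times S^1;\E)\cong H_i(X;\E)\oplus H_{i-1}(X;\E)$ and its Mayer--Vietoris generalization to $X\times(\bigvee_m S^1)$, it shows by induction on the factors that $per\colon ko_*(B\pi)\to KO_*(B\pi)$ is a monomorphism (leaving the SNC hypothesis implicit, as it is known for such groups). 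Your route is shorter, avoids any homological computation, and has the virtue of making the SNC input explicit; the paper's route buys injectivity of $per$ in all degrees without invoking asymptotic-dimension machinery or the numerical coincidence $\as\pi\le n+3$, so it would still work if the number of factors exceeded $n+3$ --- a situation your reduction cannot handle, though it does not arise for the statement as written (exactly $n$ factors, and free abelian groups interpreted as $\Z^k$ padded by trivial factors with $k\le n$). One small bookkeeping point in your write-up: you only need the hypotheses of the preceding corollary, so deducing SNC yourself is redundant but harmless.
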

\begin{proof}
The formula for homology with coefficients in a spectrum $\E$:
$$ H_i(X\times S^1;\E)\cong H_i(X;\E)\oplus H_{i-1}(X;\E)$$
implies that if $ko_*(X)\to KO_*(X)$ is monomorphism, then
$ko_*(X\times S^1)\to KO_*(X\times S^1)$ is a monomorphism. By
induction on $m$ using the Mayer-Vietoris sequence this formula 
can be generalize to the following
$$
H_i(X\times (\vee_mS^1);\E)\cong H_i(X;\E)\oplus \bigoplus_mH_{i-1}(X;\E).$$

Therefore,
$$ko_*(X\times(\bigvee_mS^1))\to KO_*(X\times(\bigvee_mS^1))$$
is a monomorphism.
\end{proof}

\end{document}